\numberwithin{equation}{section}
\newtheorem{Theorem}{Theorem}[section]
\newtheorem{Corollary}[Theorem]{Corollary}
\newtheorem{Lemma}[Theorem]{Lemma}
\newtheorem{Proposition}[Theorem]{Proposition}
\newcommand{\abs}[1]{\lvert #1 \rvert}
\newcommand{\harm}{{\mathrm{ha}}}
\newcommand{\bw}[1]{\textstyle \bigwedge^{#1}}
\newcommand{\crm}{{\mathrm{c}}}
\newcommand{\hrm}{{\mathrm{h}}}
\newcommand{\Hrm}{{\mathrm{H}}}
\newcommand{\Jc}{\mathcal{J}}
\newcommand{\Jrm}{\mathrm{J}}
\newcommand{\Krm}{\mathrm{K}}
\newcommand{\Nat}{\mathbb{N}}
\newcommand{\norm}[1]{\lVert #1 \rVert}
\newcommand{\pd}[2]{\frac{\partial #1}{\partial #2}}
\newcommand{\pdb}[3]{\frac{\partial^2 #1}{\partial #2 \partial #3}}
\newcommand{\pdd}[5]{\frac{\partial^4 #1}{\partial #2 \partial #3 \partial #4 \partial #5}}
\newcommand{\sarm}{\mathrm{sa}}
\newcommand{\Sfk}{\mathfrak{S}}
\newcommand{\sigmatil}{\tilde{\sigma}}
\newcommand{\thetabar}{\bar{\theta}}
\newcommand{\thetatil}{\tilde{\theta}}
\newcommand{\util}{\tilde{u}}
\newcommand{\xtil}{\tilde{x}}
\begin{document}

\newcommand{\arXivNumber}{1801.06888}

\renewcommand{\PaperNumber}{089}

\FirstPageHeading

\ShortArticleName{On Lagrangians with Reduced-Order Euler--Lagrange Equations}

\ArticleName{On Lagrangians with Reduced-Order\\ Euler--Lagrange Equations}

\Author{David SAUNDERS}

\AuthorNameForHeading{D.~Saunders}

\Address{Department of Mathematics, Faculty of Science, The University of Ostrava,\\ 30.\ dubna 22, 701 03 Ostrava, Czech Republic}
\Email{\href{mailto:david@symplectic.demon.co.uk}{david@symplectic.demon.co.uk}}

\ArticleDates{Received January 26, 2018, in final form August 23, 2018; Published online August 25, 2018}

\Abstract{If a Lagrangian defining a variational problem has order $k$ then its Euler--Lagrange equations generically have order~$2k$. This paper considers the case where the Euler--Lagrange equations have order strictly less than $2k$, and shows that in such a case the Lagrangian must be a polynomial in the highest-order derivative variables, with a specific upper bound on the degree of the polynomial. The paper also provides an explicit formulation, derived from a geometrical construction, of a family of such $k$-th order Lagrangians, and it is conjectured that all such Lagrangians arise in this way.}

\Keywords{Euler--Lagrange equations; reduced-order; projectable}

\Classification{58E30}

\vspace{-2mm}

\section{Introduction}

If $L$ is a Lagrangian function depending on some independent variables $x^i$, some dependent variables $u^\alpha$, and some first derivative variables, then the resulting Euler--Lagrange equations
\begin{gather*}
\pd{L}{u^\alpha} - \frac{{\rm d}}{{\rm d}x^i} \pd{L}{u^\alpha_i} = 0
\end{gather*}
are \looseness=-1 generically of second order: the total derivative operator ${\rm d}/{\rm d}x^i$ maps first-order variables to second-order variables. For a Lagrangian depending also on higher-order derivative variables $u^\alpha_{ij}, u^\alpha_{ijh}, \ldots$ (of order up to~$k$) the Euler--Lagrange equations, written in a multi-index notation as
\begin{gather*}
\sum_{\abs{I}=0}^k (-1)^{\abs{I}} \frac{{\rm d}^{\abs{I}}}{{\rm d}x^I} \pd{L}{u^\alpha_I} = 0 ,
\end{gather*}
are generically of order $2k$. This paper considers the case of $k$-th order Lagrangians whose Euler--Lagrange equations have order strictly less than~$2k$.

The existence of Lagrangians with reduced-order Euler--Lagrange equations has been known for a long time. The Einstein--Hilbert Lagrangian from general relativity (see, for example,~\cite{Car82}) is second-order but its Euler--Lagrange equations, the Einstein field equations, are again second-order rather than fourth-order. In this case, though, the Lagrangian may be written (although not invariantly) as the sum of a~first-order Lagrangian and a~total divergence~\cite[Sections~3.3.1 and~3.3.2]{Car82}, so that these second-order Euler--Lagrange equations may in fact be derived from a~first-order Lagrangian.

Some examples with more substance may be found in~\cite{Krv86,Krv97} in the context of Lagrangians involving a single independent variable. Any such $k$-th order Lagrangian which is linear in the derivative variables of highest order $k$ will give rise to Euler--Lagrange equations of order strictly less than~$2k$. Of course any total derivative $L = {\rm d}f / {\rm d}x$ will satisfy this condition trivially, because its Euler--Lagrange equations will vanish identically. But not every Lagrangian linear in the highest order derivatives is a total derivative (or a total derivative plus a lower-order function); the simplest such example is the first-order Lagrangian $L_1 = uv_x - vu_x$, giving rise to the first-order Euler--Lagrange equations $u_x = v_x = 0$. On the other hand, a Lagrangian involving a single independent variable which is not linear in the derivative variables of highest order~$k$ will necessarily give rise to Euler--Lagrange equations of order~$2k$.

With more than one independent variable, the situation becomes more complicated. It remains the case that linearity in the highest order derivatives is a sufficient condition for reduced-order Euler--Lagrange equations; but now the condition is no longer necessary. For example, in~\cite{PV01} a class of `special Lagrangians' is defined. These are differential forms (integrands of the variational problem) rather than functions, and they are constructed using a procedure of ho\-rizontalization. The coefficient function (the Lagrangian function) is a polynomial of degree $m$ in the derivative variables of highest order $k$, where $m$ is the number of independent variables, and indeed the polynomial is a linear combination of determinants in those variables. Once again, the Euler--Lagrange equations have order strictly less than $2k$. A simple example is the Lagrangian $L_2 = u_x\big(u_{xx} u_{yy} - u_{xy}^2\big)$ which gives rise to a third order Euler--Lagrange equation; the $2$-form $L_2 {\rm d}x \wedge {\rm d}y$ is the horizontalization of $u_x {\rm d}u_x \wedge {\rm d}u_y$.

These special Lagrangians do not, though, exhaust the possibilities. Consider a problem with two independent variables $x$, $y$ and three dependent variables $u$, $v$, $w$. The second order Lagrangian
\begin{gather*}
L_3 = u_{xx} v_{xy} w_{yy} - u_{xx} v_{yy} w_{xy} + u_{xy} v_{yy} w_{xx} - u_{xy} v_{xx} w_{yy} + u_{yy} v_{xx} w_{xy} - u_{yy} v_{xy} w_{xx}
\end{gather*}
gives rise to third order Euler--Lagrange equations but is not a special Lagrangian in the sense of~\cite{PV01} because it is cubic rather than quadratic in the second derivative variables (although it is again a determinant). The same comment applies to the fourth order Lagrangian
\begin{gather*}
L_4 = u_{xxxx} u_{xxyy} u_{yyyy} + 2 u_{xxxy} u_{xxyy} u_{xyyy} - u_{xxxx} u_{xyyy}^2 - u_{xxxy}^2 u_{yyyy} - u_{xxyy}^3,
\end{gather*}
which gives rise to a sixth order Euler--Lagrange equation. This suggests that some more general alternating structure might be needed.

We obtain such a structure by using a version of the theory of differential hyperforms~\cite{Olv82}\footnote{I am grateful to Peter~Olver for providing me with information about this reference.}. These are tensors with symmetry properties corresponding to Young diagrams; if the diagram contains a single column then the tensor is purely alternating and so corresponds to an ordinary differential form. We shall make particular use of hyperforms which are alternating combinations of symmetric tensors. It is known that hyperforms give rise to particular types of determinant known as hyperjacobians (see~\cite{Olv83}), but the determinants used in the present paper appear to be of a somewhat different nature.

The structure of this paper is as follows. In Section~\ref{Snotation} we summarize, for the reader's convenience, the notation and conventions we shall adopt, and in Section~\ref{section3} we prove the polynomial property for Lagrangians with reduced-order Euler--Lagrange equations; this proof makes use (in Lemma~\ref{Lpg}) of a surprising geometrical interpretation of the space of multi-indices. Next in Section~\ref{section4} we give a formal definition of the specific types of hyperform we shall use to construct Lagrangians, and in Section~\ref{Hypsec} we show that any Lagrangian obtained from such a hyperform has reduced-order Euler--Lagrange equations.

Our conjecture is that every Lagrangian with reduced-order Euler--Lagrange equations may be constructed as a sum of Lagrangians obtained in this way, and finally in Section~\ref{section6} we present some evidence in support of that conjecture. It would also be interesting, for future work, to attempt to extend this approach to give geometrical constructions of Lagrangians of order~$k$ whose Euler--Lagrange equations have order less than $2k-1$, $2k-2$, and so on, using the hyperjacobian structure of those equations~\cite[Proposition~4.52]{And89}.

\section{Notation and conventions}\label{Snotation}

We consider a fibred manifold $\pi\colon E \to M$ with $\dim M = m$ and $\dim E = m + n$. Local coordinates on $M$ will be $(x^i)$, and adapted local coordinates on $E$ will be $(x^i,u^\alpha)$. We adopt the convention for wedge products (and also symmetric products) that no fractional factorial coefficient is used, so that for instance
\begin{gather*}
{\rm d}x^i \wedge {\rm d}x^j = {\rm d}x^i \otimes {\rm d}x^j - {\rm d}x^j \otimes {\rm d}x^i , \qquad {\rm d}x^i \odot {\rm d}x^j = {\rm d}x^i \otimes {\rm d}x^j + {\rm d}x^j \otimes {\rm d}x^i
\end{gather*}
without, in these cases, any factor of one-half.

For any order $k \ge 1$ we consider the fibred manifold $\pi_k\colon J^k\pi \to M$ of $k$-th order jets (of local sections of $\pi$) with adapted local coordinates $(x^i,u^\alpha_I)$ where $I \in \Nat^m$ is a multi-index indicating that, for $1 \le i \le m$, $I(i)$ derivatives have been taken with respect to the variable~$x^i$; by default $J^0\pi = E$. We note that $\pi_{k,k-1}\colon J^k\pi \to J^{k-1}\pi$ is an affine bundle with model vector bundle $V\pi \otimes S^k T^* M \to J^{k-1}\pi$, so that it makes sense to say that a function on $J^k\pi$ is polynomial in the `highest derivatives', the fibre coordinates $u^\alpha_I$ where $\abs{I} = \sum\limits_{i=1}^m I(i) = k$. In general our notation will follow that of~\cite{Sau89} except where indicated.

For any $k \ge 0$ we shall let $\Omega^p\big(J^k\pi\big)$ denote the module of $p$-forms on $J^k\pi$. A $p$-form $\omega \in \Omega^p\big(J^k\pi\big)$ is called \emph{horizontal} if the contraction $i_X \omega = 0$ for any vector field $X$ on $J^k\pi$ vertical over $M$; if instead the pullback $(j^k\phi)^* \omega = 0$ for any local section $\phi$ of $\pi$, where $j^k\phi$ denotes the prolonged local section of $\pi_k$, then we say that $\omega$ is a \emph{contact form}. Any $p$-form $\pi_{k,k-1}^* \varpi$, where $\varpi \in \Omega^p\big(J^{k-1}\pi\big)$, may be written uniquely as $\hrm(\varpi) + \crm(\varpi)$ where $\hrm(\varpi)$ is horizontal and $\crm(\varpi)$ is contact; we say that $\hrm(\varpi)$ is the \emph{horizontalization} of~$\varpi$.

A \emph{Lagrangian density} of order $k$ is a horizontal $m$-form $\lambda \in \Omega^m\big(J^k\pi\big)$, and it is \emph{special} in the sense of~\cite{PV01} if $\lambda = \hrm(\varpi)$ for some $\varpi \in \Omega^m\big(J^{k-1}\pi\big)$. Any Lagrangian density $\lambda$, special or not, may be written in coordinates $\lambda = L {\rm d}x^1 \wedge {\rm d}x^2 \wedge \cdots \wedge {\rm d}x^m$ where $L$ is the corresponding local Lagrangian function. The \emph{Euler--Lagrange form} of $\lambda$ is the $(m+1)$-form $\varepsilon$ on~$J^{2k}\pi$ obtained by a canonical procedure from $\lambda$ (essentially taking the exterior derivative and then integrating by parts $k$ times) and incorporates the Euler--Lagrange equations for $\lambda$; in coordinates
\begin{gather*}
\varepsilon = \varepsilon_\alpha {\rm d}u^\alpha \wedge {\rm d}x^1 \wedge {\rm d}x^2 \wedge \cdots \wedge {\rm d}x^m ,
\qquad \varepsilon_\alpha = \sum_{\abs{I}=0}^k (-1)^{\abs{I}} \frac{{\rm d}^{\abs{I}}}{{\rm d}x^I} \pd{L}{u^\alpha_I} .
\end{gather*}
The underlying structures involved in these constructions are either the infinite-order variational bicomplex~\cite{And89} or the finite-order variational sequence~\cite{Kru90}. We shall say that the Euler--Lagrange form~$\varepsilon$ and the associated Euler--Lagrange equations $\varepsilon_\alpha = 0$ are \emph{projectable} if the form $\varepsilon$, generically defined on $J^{2k}\pi$, is projectable to $J^{2k-1}\pi$; the order of the Euler--Lagrange equations will then be strictly less than~$2k$.

Any $p$-form $\Omega^p \big(J^k\pi\big)$ is a section of the bundle $\bw{p}T^* J^k\pi \to J^k\pi$, and any horizontal $p$-form is a section of the pull-back bundle $\bw{p} T^*M \to J^k\pi$. We shall use the terminology \emph{horizontal~$(p,q)$ hyperform of order~$k$} to denote a section of the pullback bundle $\bw{p} S^q T^* M \to J^k\pi$, so that in a chart on $U^k \subset J^k\pi$ such a hyperform looks like
\begin{gather*}
\sum_{\substack{\abs{I_r} = q \\ 1 \le r \le p}}\omega_{I_1 I_2 \cdots I_p} {\rm d}x^{I_1} \wedge {\rm d}x^{I_2} \wedge \cdots \wedge {\rm d}x^{I_p}
\end{gather*}
with $\omega_{I_1 I_2 \cdots I_p} \in C^\infty\big(U^k\big)$, where if the multi-index $I$ corresponds to the list of ordinary indices $i_1 i_2 \cdots i_q$ then
\begin{gather*}
{\rm d}x^I = {\rm d}x^{i_1} \odot {\rm d}x^{i_2} \odot \cdots \odot {\rm d}x^{i_q} .
\end{gather*}
The collection $\Omega^{p,q}_\hrm\big(J^k\pi\big)$ of all such hyperforms of given degree $(p,q)$ is module over both $C^\infty\big(J^k\pi\big)$ and, significantly, over $C^\infty\big(J^{k-1}\pi\big)$.

We now fix the order $k > 0$, and restrict attention to the case $1 \le q \le k$. We shall use italic capitals such as $I$ to denote multi-indices with length $\abs{I} = k-q$ (or, sometimes, with length $\abs{H} = 2k$), calligraphic letters such as $\Jc$ to denote multi-indices with length~$q$, and roman capitals such as $\Krm$ to denote multi-indices with length $k$. Except where stated otherwise we shall adopt the usual summation convention for such multi-indices, as well as for ordinary indices $i,j,\ldots$ and $\alpha,\beta,\ldots$, but readers should be aware that the contraction of symmetric tensors using this multi-index notation introduces numerical constants.

\section{The polynomial property}\label{section3}

Our first result is that if a Lagrangian density of order $k$ gives rise to a projectable Euler--Lagrange form then in any coordinate system the corresponding Lagrangian function must be a polynomial of order at most $p_k$ in the highest order derivative variables $u^\alpha_\Krm$, where $p_k$ is the number of distinct multi-indices $\Krm$ of length $k$.

To illustrate our approach, we describe the procedure for two special cases.

In the first special case we take $k=2$, so that we want to show that $L$ must be a polynomial of degree at most $p_2 = \tfrac{1}{2} m(m+1)$ in the second order derivative variables. We know, as a~consequence of projectability, that the expression
\begin{gather*}
\sum_{\abs{\Krm} = 2} \frac{{\rm d}^2}{{\rm d}x^\Krm} \pd{L}{u^\beta_\Krm}
\end{gather*}
has order strictly less than $4$, so if we expand the second order total derivatives we obtain
\begin{gather*}
\sum_{\abs{\Jrm} = \abs{\Krm} = 2} u^\alpha_{\Jrm+\Krm} \pdb{L}{u^\alpha_\Jrm}{u^\beta_\Krm} + \cdots,
\end{gather*}
where the dots indicate terms whose order is less than $4$. It follows that, for each multi-in\-dex~$H$ with $\abs{H} = 4$, we must have
\begin{gather}\label{Eq4}
\sum_{\Jrm + \Krm = H} \pdb{L}{u^\alpha_\Jrm}{u^\beta_\Krm} = 0 .
\end{gather}
Fix an index $i$ with $1 \le i \le m$, and let $H_i$ be the multi-index satisfying $H_i(i) = 4$, $H_i(j) = 0$ for $j \ne i$; here and more generally we call a multi-index with only a single nonzero entry a~\emph{pure multi-index}. We then see immediately from equation~\eqref{Eq4} that if $\Krm_i$ is the pure multi-index satisfying $\Krm_i(i) = 2$, $\Krm_i(j) = 0$ for $j \ne i$ then
\begin{gather}\label{Eqpure}
\pdb{L}{u^\alpha_{\Krm_i}}{u^\beta_{\Krm_i}} = 0 .
\end{gather}
Now fix indices $i$, $j$ with $j \ne i$, and let $H_{ij}$ be the multi-index satisfying $H_{ij}(i) = H_{ij}(j) = 2$, $H_{ij}(h) = 0$ for $h \ne i,j$; we call this a \emph{mixed multi-index}. If $\Krm_{ij}$ is the mixed multi-index satisfying $\Krm_{ij}(i) = \Krm_{ij}(j) = 1$, $\Krm_{ij}(h) = 0$ for $h \ne i,j$ then we see from~\eqref{Eq4} that
\begin{gather*}
\pdb{L}{u^\alpha_{\Krm_{ij}}}{u^\beta_{\Krm_{ij}}} = - \pdb{L}{u^\alpha_{\Krm_i}}{u^\beta_{\Krm_j}} - \pdb{L}{u^\alpha_{\Krm_j}}{u^\beta_{\Krm_i}} ,
\end{gather*}
so that
\begin{gather}
\pdd{L}{u^\alpha_{\Krm_{ij}}}{u^\beta_{\Krm_{ij}}}{u^\gamma_{\Krm_{ih}}}{u^\delta_{\Krm_{ih}}}
 = \pdd{L}{u^\alpha_{\Krm_i}}{u^\beta_{\Krm_j}}{u^\gamma_{\Krm_i}}{u^\delta_{\Krm_h}}
+ \pdd{L}{u^\alpha_{\Krm_i}}{u^\beta_{\Krm_j}}{u^\gamma_{\Krm_h}}{u^\delta_{\Krm_i}} \nonumber \\
\hphantom{\pdd{L}{u^\alpha_{\Krm_{ij}}}{u^\beta_{\Krm_{ij}}}{u^\gamma_{\Krm_{ih}}}{u^\delta_{\Krm_{ih}}}=}{}
 + \pdd{L}{u^\alpha_{\Krm_j}}{u^\beta_{\Krm_i}}{u^\gamma_{\Krm_i}}{u^\delta_{\Krm_h}}
+ \pdd{L}{u^\alpha_{\Krm_j}}{u^\beta_{\Krm_i}}{u^\gamma_{\Krm_h}}{u^\delta_{\Krm_i}} = 0 .\label{Eqmixed}
\end{gather}
Thus if the expression
\begin{gather*}
\frac{\partial^r L}{\partial u^{\alpha_1}_{\Jrm_1} \partial u^{\alpha_2}_{\Jrm_2} \cdots \partial u^{\alpha_r}_{\Jrm_r}}
\end{gather*}
does not vanish then in the list $(\Jrm_1, \Jrm_2, \ldots, \Jrm_r)$ each distinct pure multi-index $\Krm_i$ can appear at most once (from~\eqref{Eqpure}), and each distinct mixed multi-index $\Krm_{ij}$ can appear at most three times (from~\eqref{Eqmixed} with $h=j$). Furthermore if both $\Krm_{ij}$ and $\Krm_{ih}$ appear then either one or the other must appear only once (from~\eqref{Eqmixed}).

Let $a$ be the number of pure multi-indices $\Krm_i$ in the list, and let $b$, $c$ and $d$ be the number of mixed multi-indices $\Krm_{ij}$ with, respectively, multiplicities $1$, $2$ and $3$. Clearly $b+c+d \le \tfrac{1}{2} m(m-1)$. On the other hand, $a \le m - 2(c+d)$ because if $\Krm_{ij}$ appears with multiplicity $2$ or $3$ (so that if $h \ne i,j$ then $\Krm_{ih}$ and $\Krm_{jh}$ can have multiplicity at most $1$) then neither $\Krm_i$ nor $\Krm_j$ can appear at all. We therefore see that
\begin{gather*}
r = a + b + 2c + 3d \le m - 2(c+d) + \tfrac{1}{2} m(m-1) + c + 2d \le \tfrac{1}{2} m(m+1) = p_2 .
\end{gather*}

In that first special case with $k=2$ we were able to see explicitly the polynomial structure of the Lagrangian, but for higher orders this detailed investigation rapidly becomes unmanageable, so we need to adopt a more abstract approach. For our second special case we therefore let $k$ be arbitrary, but take $m=2$. There are now $p_k = k+1$ multi-indices of length $k$, and now the consequence of projectability is that, for $0 \le h \le k$,
\begin{gather}\label{Eq5}
\sum_{l=0}^{k-h} \pdb{L}{u^\alpha_{(k-l,l)}}{u^\beta_{(h+l,k-h-l)}} = 0.
\end{gather}
As before we use this relation to manipulate the repeated partial derivatives of $L$, but now we need a mechanism to keep track of what we are doing and help us avoid going round in circles. We do this by defining the \emph{weight}\footnote{Note that this type of weight is different from the system of weights defined in a similar context in~\cite[De\-fi\-ni\-tion~4.18]{And89}.} of a multi-index $\Jrm$ of length $k$ to be the squared Euclidean norm, $\norm{\Jrm}^2 = \sum\limits_{j=1}^m (\Jrm(j))^2$. We also define the weight of a list of multi-indices $(\Jrm_1, \Jrm_2, \ldots, \Jrm_r)$ to be the sum of the individual weights $\norm{\Jrm_1}^2 + \norm{\Jrm_2}^2 + \cdots + \norm{\Jrm_r}^2$. It is important to note that the maximum weight of a multi-index is $k^2$, and that this maximum is achieved if, and only if, the multi-index is pure. In our special case with $m=2$ we have $\norm{(h,l)}^2 = h^2 + l^2$, and the maximum weight is achieved by $(k,0)$ and $(0,k)$.

We now use this to show that every repeated derivative of order $p_k+1 = k+2$,
\begin{gather}\label{RD}
\frac{\partial^{k+2} L}{\partial u^{\alpha_1}_{\Jrm_1} \partial u^{\alpha_2}_{\Jrm_2} \cdots \partial u^{\alpha_{k+2}}_{\Jrm_{k+2}}}
\end{gather}
must vanish. Here we use the fact that, as in the case $k=2$, we have{\samepage
\begin{gather*}
\sum_{\Jrm + \Krm = H} \pdb{L}{u^\alpha_\Jrm}{u^\beta_\Krm} = 0,
\end{gather*}
where $\abs{H}=2k$, so that if $\Jrm$ is a pure multi-index then $\partial^2 L / \partial u^\alpha_\Jrm \partial u^\beta_\Jrm = 0$.}

Let $(\Jrm_1, \Jrm_2, \ldots, \Jrm_{k+2})$ be the list of multi-indices in the repeated derivative, so that necessarily at least two of these multi-indices must be equal. If they are both equal to $(k,0)$, or both equal to $(0,k)$, so that they are pure, then the repeated derivative must vanish. So suppose this is not the case, and assume without loss of generality that $\Jrm_1 = \Jrm_2 = (k-g,g)$. From~\eqref{Eq5} with $h=k-2g$ we see that
\begin{align*}
\pdb{L}{u^{\alpha_1}_{(k-g,g)}}{u^{\alpha_2}_{(k-g,g)}}
= \sum_{\substack{\Krm_1 + \Krm_2 = (2k-2g,2g) \\ \Krm_1, \Krm_2 \ne (k-g,g)}} - \pdb{L}{u^{\alpha_1}_{\Krm_1}}{u^{\alpha_2}_{\Krm_2}} .
\end{align*}
On the left-hand side the weight of the two multi-indices $(\Jrm_1,\Jrm_2)$ is $2 \bigl( (k-g)^2 + g^2 \bigr)$, whereas in a term on the right-hand side with $l \ne 0$ and $\Krm_1 = (k-g-l,g+l)$, $\Krm_2 = (k-g+l,g-l)$ the weight of $(\Krm_1,\Krm_2)$ is
\begin{gather*}
 \bigl( (k-g-l)^2 + (g+l)^2 \bigr) + \bigl( (k-g+l)^2 + (g-l)^2 \bigr) = 2 \bigl( (k-g)^2 + g^2 \bigr) + 4l^2 .
\end{gather*}
We may therefore write our original repeated derivative~\eqref{RD} as (apart from an overall sign) a~sum of similar repeated derivatives where, in each term, the weight of the multi-index list has increased. Furthermore, each new term also has the property that two of its multi-indices must be equal, so by repeating the process we must eventually be able to write~\eqref{RD} as a sum or difference of terms, each of which has three pure multi-indices of maximum weight~$k^2$ (that is, either $(k,0)$ or $(0,k)$), so that two of the pure multi-indices must be equal, and therefore each such term must vanish.

The proof of the general result with $k$ and $m$ both arbitrary follows essentially the same approach as in the second special case. We first confirm the relationship between the weights of multi-indices of length~$k$.
\begin{Lemma}\label{Lpg} If $\abs{\Jrm} = \abs{\Krm_1} = \abs{\Krm_2} = k$ and $2\Jrm = \Krm_1 + \Krm_2$ then the weight $\norm{J}$ satisfies $2\norm{\Jrm}^2 \le \norm{\Krm_1}^2 + \norm{\Krm_2}^2$, with equality only when $\Krm_1 = \Krm_2 = \Jrm$.
\end{Lemma}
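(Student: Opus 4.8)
The plan is to exploit the geometric interpretation of multi-indices as lattice points in $\mathbb{R}^m$, with the weight $\norm{\Jrm}^2$ being simply the ordinary squared Euclidean norm of the point $\Jrm = (\Jrm(1), \ldots, \Jrm(m))$. The hypothesis $2\Jrm = \Krm_1 + \Krm_2$ then says precisely that $\Jrm$ is the midpoint of the segment joining $\Krm_1$ and $\Krm_2$. Since the squared-norm function is strictly convex, both the inequality and its equality case should follow immediately, and the cleanest route is through the parallelogram law.

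Concretely, I would apply the parallelogram identity
\begin{gather*}
\norm{\Krm_1 + \Krm_2}^2 + \norm{\Krm_1 - \Krm_2}^2 = 2\norm{\Krm_1}^2 + 2\norm{\Krm_2}^2 .
\end{gather*}
Substituting $\Krm_1 + \Krm_2 = 2\Jrm$ gives $\norm{\Krm_1 + \Krm_2}^2 = 4\norm{\Jrm}^2$, and rearranging yields
\begin{gather*}
2\norm{\Jrm}^2 = \norm{\Krm_1}^2 + \norm{\Krm_2}^2 - \tfrac{1}{2}\norm{\Krm_1 - \Krm_2}^2 .
\end{gather*}
Since $\norm{\Krm_1 - \Krm_2}^2 \ge 0$, the desired inequality $2\norm{\Jrm}^2 \le \norm{\Krm_1}^2 + \norm{\Krm_2}^2$ follows at once.

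For the equality case, the identity above shows that $2\norm{\Jrm}^2 = \norm{\Krm_1}^2 + \norm{\Krm_2}^2$ holds if and only if $\norm{\Krm_1 - \Krm_2}^2 = 0$, that is $\Krm_1 = \Krm_2$; the midpoint relation $2\Jrm = \Krm_1 + \Krm_2$ then forces $\Jrm = \Krm_1 = \Krm_2$, as claimed.

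I do not anticipate any genuine obstacle here: this is an elementary Euclidean estimate, and the length constraint $\abs{\Jrm} = \abs{\Krm_1} = \abs{\Krm_2} = k$ plays no role in the inequality itself (it is automatically consistent with $2\Jrm = \Krm_1 + \Krm_2$ upon summing components). Its significance is instead contextual: it guarantees that the relevant multi-indices all lie on the same simplex slice $\sum_i \Jrm(i) = k$, so that the strict weight increase exhibited here can be iterated within that finite set, which is what drives the inductive argument of the main proof toward pure multi-indices of maximal weight $k^2$.
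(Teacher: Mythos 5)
Your proof is correct and is essentially the paper's own argument: the paper likewise invokes the parallelogram law $\norm{x+y}^2 + \norm{x-y}^2 = 2\norm{x}^2 + 2\norm{y}^2$ for the Euclidean norm, with equality in the resulting bound precisely when $y=0$, i.e., $\Krm_1 = \Krm_2 = \Jrm$. Your rearrangement (applying the identity directly to $\Krm_1,\Krm_2$ rather than to $\Jrm$ and the half-difference) is only a cosmetic variation.
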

\begin{proof}
This is just the parallelogram rule for any Euclidean norm, that
\begin{gather*}
2 \norm{x}^2 \le 2 \norm{x}^2 + 2 \norm{y}^2 = \norm{x+y}^2 + \norm{x-y}^2
\end{gather*}
with equality only when $y=0$.
\end{proof}

\begin{Theorem}\label{poly}If the Lagrangian density $\lambda$ on $J^k\pi$ has projectable Euler--Lagrange equations then in any coordinate system $\lambda = L {\rm d}x^1 \wedge {\rm d}x^2 \wedge \cdots \wedge {\rm d}x^m$ where the function~$L$, defined locally on~$J^k\pi$, is a polynomial of order at most $p_k$ in the highest order derivative variables $u^\alpha_\Jrm$, where~$p_k$ is the number of distinct multi-indices of length $k$.
\end{Theorem}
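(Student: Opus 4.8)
The plan is to generalise the weight-increasing argument from the $m=2$ special case to arbitrary $m$, using Lemma~\ref{Lpg} as the key tool. The goal is to show that any repeated partial derivative of $L$ of order $p_k+1$ with respect to the highest-order derivative variables $u^\alpha_\Jrm$ must vanish identically; this is precisely the statement that $L$ is a polynomial of degree at most $p_k$ in those variables. Throughout I would work with the projectability relation, which in general form reads
\begin{gather*}
\sum_{\Jrm + \Krm = H} \pdb{L}{u^\alpha_\Jrm}{u^\beta_\Krm} = 0
\end{gather*}
for every multi-index $H$ with $\abs{H} = 2k$, and in particular with its consequence that $\partial^2 L / \partial u^\alpha_\Jrm \partial u^\beta_\Jrm = 0$ whenever $\Jrm$ is pure.

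First I would set up the same bookkeeping as in the $m=2$ case: assign to each multi-index $\Jrm$ of length $k$ the weight $\norm{\Jrm}^2$, extend this additively to lists of multi-indices, and note that the weight of any single multi-index is at most $k^2$ with equality exactly for the pure multi-indices. Starting from a repeated derivative~\eqref{RD} of order $p_k+1$, a pigeonhole argument forces at least two of the multi-indices in the list to coincide; say $\Jrm_1 = \Jrm_2 = \Jrm$. If $\Jrm$ is pure the term vanishes immediately. Otherwise I would apply the projectability relation with $H = 2\Jrm$ to rewrite $\partial^2 L / \partial u^{\alpha_1}_\Jrm \partial u^{\alpha_2}_\Jrm$ as a sum of second derivatives $\partial^2 L / \partial u^{\alpha_1}_{\Krm_1} \partial u^{\alpha_2}_{\Krm_2}$ over the remaining decompositions $\Krm_1 + \Krm_2 = 2\Jrm$ with $\Krm_1, \Krm_2 \ne \Jrm$. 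By Lemma~\ref{Lpg} each such pair strictly increases the weight, since $2\norm{\Jrm}^2 < \norm{\Krm_1}^2 + \norm{\Krm_2}^2$ whenever $\Krm_1 \ne \Jrm$.

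Next I would argue that this substitution, applied repeatedly, terminates. Each application rewrites~\eqref{RD} as a finite signed sum of repeated derivatives of the same order $p_k+1$, each of strictly greater list-weight, and each still containing a repeated pair of multi-indices (the pair being substituted is replaced, but the remaining $p_k$ multi-indices still number more than the $p_k$ distinct available indices, so a coincidence persists). Since the list-weight is an integer bounded above by $(p_k+1)k^2$, the process cannot continue forever: it must halt only when every surviving term has its repeated pair pure, at which point that term vanishes by the pure-index relation. Hence the original derivative~\eqref{RD} is a signed sum of vanishing terms, so it is zero.

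The main obstacle I anticipate is the termination and pigeonhole argument, rather than the weight computation, which Lemma~\ref{Lpg} dispatches cleanly. Specifically, I must be careful that after a substitution the new terms still each contain a coincident pair so that the induction can proceed; this follows because a list of $p_k+1$ multi-indices drawn from only $p_k$ distinct values must repeat, but one should verify that the rewriting preserves the list length and that the weight genuinely increases at every step (not merely weakly), so that no term can cycle or stall below the maximal weight $k^2$ on its repeated entry. A secondary subtlety is bounding the degree by exactly $p_k$ rather than something larger: the bound emerges precisely because the repeated-index obstruction kicks in at list length $p_k+1$, matching the count of distinct multi-indices of length $k$, so the counting must be tied carefully to $p_k$ throughout.
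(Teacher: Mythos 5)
Your proposal is correct and follows essentially the same route as the paper: derive the relation $\sum_{\Jrm+\Krm=H}\partial^2 L/\partial u^\alpha_\Jrm\partial u^\beta_\Krm=0$ from projectability, use the pigeonhole principle on a list of $p_k+1$ multi-indices of length $k$, and apply the parallelogram-law weight argument of Lemma~\ref{Lpg} to rewrite each term as a signed sum of terms of strictly larger weight until every surviving term contains a repeated pure multi-index and hence vanishes. Your termination step (integer list-weight bounded above by $(p_k+1)k^2$, strictly increasing at each substitution) is if anything spelled out a little more explicitly than in the paper, which simply asserts that the process eventually yields $m+1$ pure multi-indices in each term.
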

\begin{proof}The consequence of projectability is now that the expression
\begin{gather*}
\sum_{\abs{\Krm} = k} \frac{{\rm d}^{\abs{\Krm}}}{{\rm d}x^\Krm} \pd{L}{u^\beta_\Krm}
\end{gather*}
has order strictly less than $2k$. Expanding the $k$-th order total derivatives gives
\begin{gather*}
\sum_{\abs{\Jrm} = \abs{\Krm} = k} u^\alpha_{\Jrm+\Krm} \pdb{L}{u^\alpha_\Jrm}{u^\beta_\Krm} + \cdots,
\end{gather*}
where the dots indicate terms whose order is less than $2k$. It follows that, for each multi-index~$H$ with $\abs{H} = 2k$, we must have
\begin{gather}\label{Eq6}
\sum_{\Jrm + \Krm = H} \pdb{L}{u^\alpha_\Jrm}{u^\beta_\Krm} = 0 .
\end{gather}
Now consider the repeated derivative of order $p+1$
\begin{gather}\label{RD2}
\frac{\partial^r L}{\partial u^{\alpha_1}_{\Jrm_1} \partial u^{\alpha_2}_{\Jrm_2} \cdots \partial u^{\alpha_{p+1}}_{\Jrm_{p+1}}},
\end{gather}
where $\abs{\Jrm_1} = \abs{\Jrm_2} = \cdots = \abs{\Jrm_{p+1}} = k$, and suppose that in the list of multi-indices $(\Jrm_1, \Jrm_2, {\ldots}, \Jrm_{p+1})$ we have $\Jrm_r = \Jrm_s = \Jrm$. Use~\eqref{Eq6} to write
\begin{gather*}
\pdb{L}{u^{\alpha_r}_\Jrm}{u^{\alpha_s}_\Jrm} = \sum_{\substack{\Krm_1 + \Krm_2 = 2\Jrm \\ \Krm_1, \Krm_2 \ne \Jrm}} - \pdb{L}{u^{\alpha_r}_{\Krm_1}}{u^{\alpha_s}_{\Krm_2}},
\end{gather*}
so that by Lemma~\ref{Lpg} we have in each term on the right-hand side $\norm{\Krm_1}^2 + \norm{\Krm_2}^2 > 2 \norm{\Jrm}^2$. By repeating this process we must eventually be able to write~\eqref{RD2} as a sum or difference of terms, each of which has $m+1$ pure multi-indices of maximum weight $k^2$ (so that two of its pure multi-indices must be equal) and therefore each of which must vanish.
\end{proof}

\section{Hyperforms}\label{section4}

The necessary condition given above for a Lagrangian density $\lambda$ on $J^k\pi$ to have projectable Euler--Lagrange equations is obviously not sufficient; but the requirement that~$\lambda$ be the horizontalization of some $m$-form on $J^{k-1}\pi$ is, as noted in the Introduction, too strong. We shall define a weaker condition on~$\lambda$ which will still be sufficient to ensure that the Euler--Lagrange equations are projectable, using the idea of a horizontal $(p,q)$ hyperform introduced in Section~\ref{Snotation}.

We first consider horizontal $(1,q)$ hyperforms; any such hyperform $\theta$ may be written in coordinates on $U^k$ as $\theta_\Jc {\rm d}x^\Jc$ where $\theta_\Jc \in C^\infty\big(U^k\big)$. We have mentioned that $\pi_{k,k-1}\colon J^k\pi \to J^{k-1}\pi$ is an affine bundle so that for any point of $J^{k-1}\pi$ the fibre over that point is an affine space. The restriction of $\theta$ to that fibre takes its values in the appropriate fibre of $S^q T^* M$, a~vector space, so it makes sense to ask whether that restriction is an affine map. If this is the case for every fibre of $\pi_{k-1,k}$ then we say that $\theta$ is an \emph{affine $(1,q)$ hyperform}; the coordinate representation of $\theta$ then becomes
\begin{gather*}
\big( \theta^\Krm_{\alpha\Jc} u^\alpha_\Krm + \theta_\Jc \big) {\rm d}x^\Jc,
\end{gather*}
where now $\theta^\Krm_{\alpha\Jc}, \theta_\Jc \in C^\infty(U^{k-1})$, $U^{k-1} \subset J^{k-1}\pi$. (Recall here that the roman multi-indices~$\Krm$ have length~$k$, whereas the calligraphic multi-indices $\Jc$ have length~$q$.)

The collection of affine hyperforms is, however, too large for our purposes. To obtain a suitable restriction, we note that the map $\theta \colon J^k\pi \to S^q T^* M$ is affine precisely when the associated difference map $\thetabar \colon V\pi \otimes_{J^{k-1}\pi} S^k T^* M \to S^q T^* M$ is linear on each fibre over $J^{k-1}\pi$. We say that $\theta$ is a \emph{special affine hyperform} if there is a tensor $\thetatil \in V\pi^* \otimes_{J^{k-1}\pi} S^{k-q} TM$ such that the fibre-linear map $\thetabar$ is given by the contraction of elements of $V\pi \otimes_{J^{k-1}\pi} S^k T^* M$ with $\thetatil$. We shall let $\Omega^{1,q}_\sarm\big(J^k\pi\big)$ denote the collection of such special affine hyperforms; it is a module over $C^\infty\big(J^{k-1}\pi\big)$, though not of course over $C^\infty\big(J^k\pi\big)$. A special affine hyperform may therefore be written in coordinates as
\begin{gather*}
\bigl( \theta^I_\alpha u^\alpha_{I+\Jc} + \theta_\Jc \bigr) {\rm d}x^\Jc,
\end{gather*}
where $\theta^I_\alpha, \theta_\Jc \in C^\infty\big(U^{k-1}\big)$, $U^{k-1} \subset J^{k-1}\pi$. (Here the italic multi-indices $I$ have length $k-q$.)

As examples of affine and special affine hyperforms, consider the case where $m=2$ and $n=1$, with coordinates $x$, $y$, $u$, and where $q = k = 2$, so that in this case each multi-index $I$ is zero. An affine hyperform will have a coordinate representation
\begin{gather*}
\bigl( \theta^{xx}_{xx} u_{xx} + \theta^{xy}_{xx} u_{xy} + \theta^{yy}_{xx} u_{yy} + \theta_{xx} \bigr) {\rm d}x \odot {\rm d}x+ \bigl( \theta^{xx}_{xy} u_{xx} + \theta^{xy}_{xy} u_{xy} + \theta^{yy}_{xy} u_{yy} + \theta_{xy }\bigr) {\rm d}x \odot {\rm d}y \\
\qquad {}
+ \bigl( \theta^{xx}_{yy} u_{yy} + \theta^{xy}_{yy} u_{xy} + \theta^{yy}_{yy} u_{yy} + \theta_{yy} \bigr) {\rm d}y \odot {\rm d}y ,
\end{gather*}
where the functions $\theta^{xx}_{xx}, \theta^{xy}_{xx}, \ldots$ are at most first order, whereas a special affine hyperform will have a~coordinate representation
\begin{gather*}
\bigl( \theta_0 u_{xx} + \theta_{xx} \bigr) {\rm d}x \odot {\rm d}x + \bigl( \theta_0 u_{xy} + \theta_{xy }\bigr) {\rm d}x \odot {\rm d}y
+ \bigl( \theta_0 u_{yy} + \theta_{yy} \bigr) {\rm d}y \odot {\rm d}y ,
\end{gather*}
where each term involves only a single second order coordinate, and where the (at most first order) function $\theta_0$ is the same for all three terms.

We may see the relationship between this definition and the operation of horizontalization on ordinary $1$-forms by considering the special case where $q = 1$. In this case a special affine hyperform $\theta$ (now just a horizontal $1$-form) may be written in coordinates as $\big(\theta^I_\alpha u^\alpha_{I+1_i} + \theta_i\big) {\rm d}x^i$ (where $1_i$ denotes the multi-index with a~single~$1$ in position~$i$) and is the horizontalization of, for instance, the $1$-form $\theta^I_\alpha {\rm d}u^\alpha_I + \theta_i {\rm d}x^i$. There is, however, no well-defined horizontalization operator mapping forms to hyperforms when $q \ge 2$.

We now consider horizontal $(p,q)$ hyperforms, where $p$ is fixed to equal the number $p_q$ of distinct multi-indices $I \in \Nat^m$ of length~$q$, so that
\begin{gather*}
p_q = \begin{pmatrix} m+q-1 \\ q \end{pmatrix} = \frac{(m+q-1)!}{q! (m-1)!} ;
\end{gather*}
the fibre dimension of $S^q T^* M$ is then equal to $p_q$, so that $\bw{p_q} S^q T^* M \to J^k\pi$ is a line bundle. We shall say that such a section of this bundle, a horizontal $(p_q,q)$ hyperform $\omega$, is \emph{hyperaffine} if it can be written as a sum of wedge products of special affine $(1,q)$ hyperforms, and we shall let $\Omega^{p_q,q}_\harm\big(J^k\pi\big)$ denote the collection of such hyperforms; again this is a module over $C^\infty\big(J^{k-1}\pi\big)$. If for a single wedge product
\begin{gather*}
\omega = \theta_1 \wedge \theta_2 \wedge \cdots \wedge \theta_{p_q},
\end{gather*}
where
\begin{gather*}
\theta_r \in \Omega^{1,q}_\harm\big(J^k\pi\big) , \qquad \theta_r = \bigl( \theta^I_{r,\alpha} u^\alpha_{I+\Jc} + \theta_{r,\Jc} \bigr) {\rm d}x^\Jc,
\end{gather*}
then in the coordinate expression for $\omega$ the coefficient of the single local basis element ${\rm d}x^{\Jc_1} \wedge {\rm d}x^{\Jc_2} \wedge \cdots \wedge {\rm d}x^{\Jc_{p_q}}$ will be given as a linear combination (by functions projectable to~$J^{k-1}\pi$) of determinants in the highest order derivative variables $u^\alpha_{I+\Jc}$, ranging in size up to $(p_q \times p_q)$; for instance the largest determinant will take the form
\begin{gather}\label{Det}
\begin{vmatrix}
u^{\alpha_1}_{I_1+\Jc_1} & u^{\alpha_1}_{I_1+\Jc_2} & \cdots & u^{\alpha_1}_{I_1+\Jc_{p_q}} \\
u^{\alpha_2}_{I_2+\Jc_1} & u^{\alpha_2}_{I_2+\Jc_2} & \cdots & u^{\alpha_2}_{I_2+\Jc_{p_q}} \\
\vdots & \vdots & \ddots & \vdots \\
u^{\alpha_{p_q}}_{I_{p_q}+\Jc_1} & u^{\alpha_{p_q}}_{I_{p_q}+\Jc_2} & \cdots & u^{\alpha_{p_q}}_{I_{p_q}+\Jc_{p_q}}
\end{vmatrix},
\end{gather}
where $u^{\alpha_1}_{I_1}, u^{\alpha_2}_{I_2}, \ldots, u^{\alpha_{p_q}}_{I_{p_q}}$ are derivative variables of order $k-q$, and smaller determinants, arising when one or more of the $(1,q)$ hyperforms $\theta_r$ is projectable to~$J^l\pi$ with $l < k$, will be obtained as suitably-sized minors. (If those derivative variables are not distinct then the largest determinant will vanish, and this always happens when $n < p_q$. It is nevertheless the case that sufficiently small minors will be nonzero.)

As examples, we may see that the four Lagrangian functions mentioned in the Introduction all arise as such coefficients. For $L_1$ we take $q=p_q=k=1$ and for $L_2$ we take $q=1$, $p_q=2$, $k=2$; both the corresponding Lagrangian densities arise from conventional horizontalization. For $L_3$ we take $q=k=2$, $p_q=3$, the variables $u$, $v$, $w$ and the hyperform obtained from the wedge product
\begin{gather*}
\bigl( u_{xx} {\rm d}x \odot {\rm d}x + u_{xy} {\rm d}x \odot {\rm d}y + u_{yy} {\rm d}y \odot {\rm d}y \bigr)
\wedge \bigl( v_{xx} {\rm d}x \odot {\rm d}x + v_{xy} {\rm d}x \odot {\rm d}y + v_{yy} {\rm d}y \odot {\rm d}y \bigr) \\
\qquad{} \wedge \bigl( w_{xx} {\rm d}x \odot dx + w_{xy} {\rm d}x \odot {\rm d}y + w_{yy} {\rm d}y \odot {\rm d}y \bigr),
\end{gather*}
so that
\begin{gather*}
L_3 = \begin{vmatrix}
u_{xx} & u_{xy} & u_{yy} \\ v_{xx} & v_{xy} & v_{yy} \\ w_{xx} & w_{xy} & w_{yy}
\end{vmatrix} ;
\end{gather*}
for $L_4$ we take $q=2$, $p_q=3$, $k=4$, the variables $u_{xx}$, $u_{xy}$, $u_{yy}$ and the hyperform obtained from the wedge product
\begin{gather*}
\bigl( u_{xxxx} {\rm d}x \odot {\rm d}x + u_{xxxy} {\rm d}x \odot {\rm d}y + u_{xxyy} {\rm d}y \odot {\rm d}y \bigr) \\
\qquad{}\wedge \bigl( u_{xxxy} {\rm d}x \odot {\rm d}x + u_{xxyy} {\rm d}x \odot {\rm d}y + u_{xyyy} {\rm d}y \odot {\rm d}y \bigr) \\
\qquad{}\wedge \bigl( u_{xxyy} {\rm d}x \odot {\rm d}x + u_{xyyy} {\rm d}x \odot {\rm d}y + u_{yyyy} {\rm d}y \odot {\rm d}y \bigr),
\end{gather*}
so that
\begin{gather*}
L_4 = \begin{vmatrix}
u_{xxxx} & u_{xxxy} & u_{xxyy} \\ u_{xxxy} & u_{xxyy} & u_{xyyy} \\ u_{xxyy} & u_{xyyy} & u_{yyyy}
\end{vmatrix} .
\end{gather*}

\section{Hyperaffine Lagrangians}\label{Hypsec}

The examples at the end of the previous section suggest how we might relate the construction of hyperaffine $(p_q,q)$ hyperforms to Lagrangian densities. We note that such a hyperform may be written in coordinates as
\begin{gather*}
\omega = \omega_q {\rm d}x^{\Jc_1} \wedge {\rm d}x^{\Jc_2} \wedge \cdots \wedge {\rm d}x^{\Jc_{p_q}} ,
\end{gather*}
and when $q > 1$ then this is obviously different from an ordinary horizontal $m$-form such as $\lambda$. The two types of object are, nevertheless, related: they are both sections of line bundles, and in coordinates each has a single coefficient function, $L$ or $\omega_q$. Furthermore, under a change of coordinates $(x^i,u^\alpha) \mapsto (\xtil^i,\util^\alpha)$, $L$ is altered by the Jacobian of the transformation $x^i \mapsto \xtil^i$, whereas $\omega_q$ is altered by a power of that Jacobian. As the condition for~$\omega$ to be hyperaffine may be expressed in terms of $\omega_q$ in a way which is independent of transformations of the base coordinates~$x^i$, it makes sense to say that a Lagrangian density $\lambda$ is hyperaffine if, in any coordinate system, the corresponding local Lagrangian function $L$ may be written as a sum $L = \sum\limits_{q=1}^k \omega_q$ where each $\omega_q$ is the coefficient in that coordinate system of a hyperaffine $(p_q,q)$ hyperform with $p_q = (m+q-1)!/q!(m-1)!$.

For example, in the case where $m=2$ and $n=1$ with coordinates $x$, $y$, $u$, we might consider the third-order Lagrangian function
\begin{gather*}
L = u_{xxx} u_{yyy} - u_{xxy} u_{xyy} .
\end{gather*}
We may write $L$ as $\omega_2$, the scalar coefficient of $\omega = \theta_1 \wedge \theta_2 \wedge \theta_3$, where
\begin{gather*}
\theta_1 = u_{xxx} {\rm d}x \odot {\rm d}x + u_{xxy} {\rm d}x \odot {\rm d}y + u_{xyy} {\rm d}y \odot {\rm d}y, \\
\theta_2 = u_{xxy} {\rm d}x \odot {\rm d}x + u_{xyy} {\rm d}x \odot {\rm d}y + u_{yyy} {\rm d}y \odot {\rm d}y, \\
\theta_3 = {\rm d}x \odot {\rm d}y,
\end{gather*}
so that $\omega_2$ is a (non-vanishing) $2 \times 2$ minor of the (vanishing) determinant
\begin{gather*}
\begin{vmatrix}
u_{xxx} & u_{xxy} & u_{xyy} \\ u_{xxy} & u_{xyy} & u_{yyy} \\ u_{xxy} & u_{xyy} & u_{yyy}
\end{vmatrix} .
\end{gather*}
We see that $L$ is a null Lagrangian, so its Euler--Lagrange equations are trivially projectable. Indeed the significance of our definition comes from the following result.

\begin{Theorem}\label{Th1}If $\lambda$ is a hyperaffine Lagrangian density on $J^k\pi$ then $\lambda$ has projectable Euler--Lagrange equations.
\end{Theorem}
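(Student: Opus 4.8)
The plan is to reduce the statement to the coordinate criterion obtained in the proof of Theorem~\ref{poly}: the Euler--Lagrange equations of $\lambda=L\,{\rm d}x^1\wedge\cdots\wedge{\rm d}x^m$ are projectable if and only if, for every pair $\alpha,\beta$ and every multi-index $H$ with $\abs{H}=2k$, the identity \eqref{Eq6} holds. The expansion in that proof shows that the only part of $\varepsilon_\beta$ of order $2k$ is $\sum_{\abs{\Jrm}=\abs{\Krm}=k}u^\alpha_{\Jrm+\Krm}\pdb{L}{u^\alpha_\Jrm}{u^\beta_\Krm}$, which is affine-linear in the variables $u^\alpha_H$ with coefficients projectable to $J^{2k-1}\pi$; hence \eqref{Eq6} is not merely necessary but also sufficient for projectability. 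Since the left-hand side of \eqref{Eq6} is linear in $L$, and a hyperaffine $L$ is a sum over $q$ of the coefficients $\omega_q$, each of which is in turn a sum of coefficients of single wedge products, it is enough to verify \eqref{Eq6} when $L$ is the coefficient of one wedge product $\omega=\theta_1\wedge\cdots\wedge\theta_{p_q}$.

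For such an $L$ I would first record, exactly as in the computations of $L_3$ and $L_4$, that $L=\det M$, where $M$ is the $p_q\times p_q$ matrix with entries $M_{rs}=\theta^I_{r,\alpha}u^\alpha_{I+\Jc_s}+\theta_{r,\Jc_s}$ and $\Jc_1,\dots,\Jc_{p_q}$ enumerate the multi-indices of length $q$. The decisive structural input, which is precisely the content of the \emph{special affine} condition, is that each entry is affine in the highest-order variables and that
\begin{gather*}
\pd{M_{rs}}{u^\gamma_\Lrm}=\theta^{\Lrm-\Jc_s}_{r,\gamma}
\end{gather*}
is a function projectable to $J^{k-1}\pi$ depending on $s$ and $\Lrm$ only through the difference $\Lrm-\Jc_s$ (and vanishing unless $\Lrm\ge\Jc_s$); in particular the coefficient tensor $\theta^I_{r,\gamma}$ is shared by every column. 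Writing $a_I=\big(\theta^I_{r,\alpha}\big)_r$ and $b_I=\big(\theta^I_{r,\beta}\big)_r$ for the corresponding projectable column vectors, the derivative of column $s$ with respect to $u^\alpha_\Jrm$ is $a_{\Jrm-\Jc_s}$, and the derivative of column $t$ with respect to $u^\beta_\Krm$ is $b_{\Krm-\Jc_t}$.

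Next I would differentiate $\det M$ twice by multilinearity in the columns. Because every column is affine, no column can be differentiated twice, so only pairs of distinct columns survive. Letting $\Delta_{s,t}(y,z)$ denote the determinant of $M$ with column $s$ replaced by the vector $y$ and column $t$ by the vector $z$, this gives
\begin{gather*}
\pdb{\det M}{u^\alpha_\Jrm}{u^\beta_\Krm}=\sum_{s\ne t}\Delta_{s,t}\big(a_{\Jrm-\Jc_s},\,b_{\Krm-\Jc_t}\big).
\end{gather*}
Summing over $\Jrm+\Krm=H$ and reindexing by $I=\Jrm-\Jc_s$, $I'=\Krm-\Jc_t$, the ordered pair $(s,t)$ contributes $\sum_{I+I'=P}\Delta_{s,t}(a_I,b_{I'})$, where $P=H-\Jc_s-\Jc_t$; note that the summation domain $I+I'=P$ is symmetric in the two indices.

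The crux, and the step I expect to be the main obstacle to phrase cleanly, is to pair the contribution of $(s,t)$ with that of $(t,s)$. After the analogous reindexing the pair $(t,s)$ contributes $\sum_{I+I'=P}\Delta_{t,s}(a_I,b_{I'})$ over the same set; but $\Delta_{t,s}(a_I,b_{I'})=\Delta_{s,t}(b_{I'},a_I)=-\Delta_{s,t}(a_I,b_{I'})$, the last equality being the sign change under the transposition of the two modified columns. Hence the two contributions are negatives of one another and cancel. It is here that the special-affine uniformity is indispensable: the vectors $a_I,b_{I'}$ must be the same whichever column they occupy, and the domain $I+I'=P$ must be unchanged when $s$ and $t$ are interchanged; had the entries been merely fibre-affine, or had the symmetric contraction introduced column-dependent numerical factors, this cancellation would break down. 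Summing over all unordered pairs $\{s,t\}$ therefore yields \eqref{Eq6} for $\det M$, and summing over wedge products and over $q$ establishes \eqref{Eq6} for $L$, so that $\lambda$ has projectable Euler--Lagrange equations.
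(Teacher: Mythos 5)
Your proposal is correct and takes essentially the same route as the paper: the paper expands the determinant as a permutation sum, applies $\sum_{\Krm}{\rm d}^{\abs{\Krm}}/{\rm d}x^{\Krm}$ to $\partial L/\partial u^\beta_\Krm$, and cancels the order-$2k$ terms by pairing each $\sigma$ with the permutation $\sigmatil$ that transposes the two affected column assignments (so $\varepsilon_\sigma+\varepsilon_{\sigmatil}=0$), which is precisely your pairing of the ordered column pairs $(s,t)$ and $(t,s)$ via antisymmetry of the determinant under a column swap, with the special-affine condition guaranteeing in both versions that everything except the sign is invariant under the swap. The only presentational differences are that you isolate criterion~\eqref{Eq6} explicitly and differentiate the full affine matrix by column multilinearity, whereas the paper first reduces to minors of~\eqref{Det} and tracks the order-$2k$ terms of the Euler--Lagrange expression directly.
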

\begin{proof}It is sufficient to prove that a function $L$ given in coordinates as an $h \times h$ minor of the determinant~\eqref{Det} gives rise to Euler--Lagrange equations of order strictly less than~$2k$. As terms in those equations of order $2k$ can arise only when considering
\begin{gather*}
\sum_{\abs{\Krm} = k} \frac{{\rm d}^{\abs{\Krm}}}{{\rm d}x^\Krm} \pd{L}{u^\beta_\Krm} ,
\end{gather*}
where we have written the sum over the multi-indices $\Krm$ explicitly, it is sufficient to show that each such term (for a given index $\beta$) vanishes when $L$ is such a determinant.

Write $L$ in the form
\begin{gather*}
L = \sum_{\sigma \in \Sfk_h} \varepsilon_\sigma u^{\alpha_1}_{I_1 + \Jc_{\sigma(1)}} u^{\alpha_2}_{I_2 + \Jc_{\sigma(2)}} \cdots u^{\alpha_h}_{I_h + \Jc_{\sigma(h)}},
\end{gather*}
where $\Sfk_h$ is the permutation group and $\varepsilon_\sigma = \pm 1$ is the parity of the permutation $\sigma$; then for any given multi-index $\Krm$ we have
\begin{gather*}
\frac{{\rm d}^{\abs{\Krm}}}{{\rm d}x^\Krm} \pd{L}{u^\beta_\Krm}
= \sum_{\substack{1 \le r,s \le h \\ s \ne r}} \sum_{\sigma \in \Sfk_h} \delta^{\alpha_r}_\beta \delta^K_{I_r + \Jc_{\sigma(r)}}
\varepsilon_\sigma \Phi_{rs\sigma} u^{\alpha_s}_{I_r + I_s + \Jc_{\sigma(r)} + \Jc_{\sigma(s)}},
\end{gather*}
where the coefficient functions $\Phi_{rs\sigma}$ are given by
\begin{gather*}
\Phi_{rs\sigma} = u^{\alpha_1}_{I_1 + \Jc_{\sigma(1)}} u^{\alpha_2}_{I_2 + \Jc_{\sigma(2)}} \cdots \widehat{r} \cdots \widehat{s} \cdots u^{\alpha_h}_{I_h + \Jc_{\sigma(h)}}
\end{gather*}
with the circumflex denoting the omission of a factor in the product. As the multi-indices $I_1, I_2, \ldots, I_h$ and $\Jc_1, \Jc_2, \ldots, \Jc_h$ are given, it follows that
\begin{gather*}
\sum_{\abs{\Krm} = k} \frac{{\rm d}^{\abs{\Krm}}}{{\rm d}x^\Krm} \pd{L}{u^\beta_\Krm}
= \sum_{\substack{1 \le r,s \le h \\ s \ne r}} \sum_{\sigma \in \Sfk_h} \delta^{\alpha_r}_\beta
\varepsilon_\sigma \Phi_{rs\sigma} u^{\alpha_s}_{I_r + I_s + \Jc_{\sigma(r)} + \Jc_{\sigma(s)}},
\end{gather*}
where the factor $\delta^K_{I_r + \Jc_{\sigma(r)}}$ on the right-hand side is omitted.

Fix values for $r$ and $s$; we shall show that
\begin{gather*}
\sum_{\sigma \in \Sfk_h} \delta^{\alpha_r}_\beta \varepsilon_\sigma \Phi_{rs\sigma} u^{\alpha_s}_{I_r + I_s + \Jc_{\sigma(r)} + \Jc_{\sigma(s)}} = 0 .
\end{gather*}
To see this, for each $\sigma \in \Sfk_h$ let $\sigmatil$ be the permutation given by
\begin{gather*}
\sigmatil(r) = \sigma(s) , \qquad \sigmatil(s) = \sigma(r) , \qquad \sigmatil(t) = \sigma(t) \qquad \text{for $1 \le t \le h$, $t \ne r,s$} .
\end{gather*}
We obtain in this way a partition of $\Sfk_h$ into equivalence classes of the form $\{\sigma,\sigmatil\}$, where each equivalence class contains exactly two elements because $r \ne s$. As $\Phi_{rs\sigmatil} = \Phi_{rs\sigma}$, $\Jc_{\sigmatil(r)} + \Jc_{\sigmatil(s)} = \Jc_{\sigma(r)} + \Jc_{\sigma(s)}$ and $\varepsilon_\sigma + \varepsilon_{\sigmatil} = 0$, the result follows.
\end{proof}

\begin{Corollary}The upper bound $p_k$ given in Theorem~{\rm \ref{poly}}, for the polynomial degree of a~Lag\-rangian in the highest order derivatives, is sharp if the number of independent variables~$n$ satisfies $n \ge p_k$.
\end{Corollary}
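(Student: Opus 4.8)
The plan is to establish sharpness by exhibiting, for each order $k$ and whenever $n \ge p_k$, a single Lagrangian density with projectable Euler--Lagrange equations whose coefficient is a polynomial of degree exactly $p_k$ in the highest-order derivatives; since Theorem~\ref{poly} shows $p_k$ can never be exceeded, this confirms that the bound is attained. The obvious candidate is the top-dimensional hyperjacobian determinant~\eqref{Det} in the extreme case $q = k$. When $q = k$ the italic multi-indices $I_r$ all have length $k - q = 0$ and therefore vanish, so the $(r,s)$ entry of~\eqref{Det} reduces to $u^{\alpha_r}_{\Jc_s}$, where $\Jc_1, \Jc_2, \ldots, \Jc_{p_k}$ enumerate the $p_k$ distinct multi-indices of length $k$.

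First I would record that this determinant really is hyperaffine. Taking the $p_k$ special affine $(1,k)$ hyperforms $\theta_r = u^{\alpha_r}_\Jc \, {\rm d}x^\Jc$ (that is, the choice $\theta^I_{r,\alpha} = \delta^{\alpha_r}_\alpha$, $\theta_{r,\Jc} = 0$ in the coordinate expression of Section~\ref{section4}), the coefficient of $\theta_1 \wedge \theta_2 \wedge \cdots \wedge \theta_{p_k}$ relative to the basis element ${\rm d}x^{\Jc_1} \wedge \cdots \wedge {\rm d}x^{\Jc_{p_k}}$ is exactly this determinant. Hence the Lagrangian density $\lambda$ with $L = \omega_k$ equal to this coefficient is hyperaffine, and Theorem~\ref{Th1} immediately gives that $\lambda$ has projectable Euler--Lagrange equations.

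It then remains to check that $L$ genuinely has degree $p_k$, rather than collapsing to something smaller, and this is the only real obstacle --- it is precisely where the hypothesis $n \ge p_k$ is used. Since $n \ge p_k$, I would choose the dependent-variable indices $\alpha_1, \alpha_2, \ldots, \alpha_{p_k}$ to be pairwise distinct. Expanding the determinant by the permutation formula writes $L$ as $\sum_{\sigma \in \Sfk_{p_k}} \varepsilon_\sigma \prod_r u^{\alpha_r}_{\Jc_{\sigma(r)}}$, in which every summand is a product of $p_k$ highest-order variables. Because the $\alpha_r$ are distinct and the $\Jc_s$ are distinct, distinct permutations yield distinct monomials, so no cancellation occurs and $L$ is a nonzero polynomial of degree exactly $p_k$; as remarked after~\eqref{Det}, this non-degeneracy is exactly what fails when $n < p_k$, since repeated row indices would force the determinant to vanish. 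Combining this with Theorem~\ref{poly} shows that the bound $p_k$ is attained, and hence sharp.
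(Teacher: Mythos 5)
Your proposal is correct and follows essentially the same route as the paper: take $q=k$ so that all the multi-indices $I_r$ vanish, form the $p_k \times p_k$ determinant in $p_k$ distinct dependent variables, and invoke Theorem~\ref{Th1} to get projectability. The only addition is your explicit check that the determinant has degree exactly $p_k$ with no cancellation, which the paper leaves implicit but which is a worthwhile observation.
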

\begin{proof}Take $q=k$, and let $\Jc_1, \Jc_2, \ldots, \Jc_{p_k}$ be the distinct multi-indices of length $k$. If
\begin{gather*}
L = \begin{vmatrix}
u^1_{\Jc_1} & u^1_{\Jc_2} & \cdots & u^1_{\Jc_{p_k}} \\
u^2_{\Jc_1} & u^2_{\Jc_2} & \cdots & u^2_{\Jc_{p_k}} \\
\vdots & \cdots & \ddots & \vdots\\
u^{p_k}_{\Jc_1} & u^{p_k}_{\Jc_2} & \cdots & u^{p_k}_{\Jc_{p_k}}
\end{vmatrix}
\end{gather*}
(so that all the multi-indices $I_r$ are zero) then $L$ is the coefficient of a hyperaffine hyperform, so that it gives rise to projectable Euler--Lagrange equations by Theorem~\ref{Th1}.
\end{proof}

\section{Discussion}\label{section6}

The arguments above show that there is a correspondence between hyperaffine $(p_q,q)$ hyperforms and Lagrangian densities with projectable Euler--Lagrange equations. The correspondence is certainly not injective, even locally in a fixed coordinate system. For instance, with $m=k=2$ and $n=3$ the Lagrangian functions
\begin{gather*}
L_3 = \begin{vmatrix}
u_{xx} & u_{xy} & u_{yy} \\ v_{xx} & v_{xy} & v_{yy} \\ w_{xx} & w_{xy} & w_{yy}
\end{vmatrix} \!, \!\!\qquad
L_5 = w \begin{vmatrix}
u_{xx} & u_{xy} \\ v_{xy} & v_{yy}
\end{vmatrix} \!,\!\! \qquad
L_6 = w \begin{vmatrix}
u_{xx} & u_{xy} \\ v_{xx} & v_{xy}
\end{vmatrix}
= w \begin{vmatrix}
u_{xx} & u_{xy} & u_{yy} \\ v_{xx} & v_{xy} & v_{yy} \\ 0 & 0 & 1
\end{vmatrix}
\end{gather*}
are all hyperaffine; we have (temporarily omitting the symmetric product symbol $\odot$)
\begin{gather*}
L_3 {\rm d}x {\rm d}x \wedge {\rm d}x {\rm d}y \wedge {\rm d}y {\rm d}y = \bigl( u_{xx} {\rm d}x {\rm d}x + u_{xy} {\rm d}x {\rm d}y + u_{yy} {\rm d}y {\rm d}y \bigr)\\
\hphantom{L_3 {\rm d}x {\rm d}x \wedge {\rm d}x {\rm d}y \wedge {\rm d}y {\rm d}y =}{}
\wedge \bigl( v_{xx} {\rm d}x {\rm d}x + v_{xy} {\rm d}x {\rm d}y + v_{yy} {\rm d}y {\rm d}y \bigr) \\
\hphantom{L_3 {\rm d}x {\rm d}x \wedge {\rm d}x {\rm d}y \wedge {\rm d}y {\rm d}y =}{}
 \wedge \bigl( w_{xx} {\rm d}x {\rm d}x + w_{xy} {\rm d}x {\rm d}y + w_{yy} {\rm d}y {\rm d}y \bigr), \\
L_5 {\rm d}x \wedge {\rm d}y = \bigl( w (u_{xx} {\rm d}x + u_{xy} {\rm d}y) \bigr) \wedge \bigl( v_{xy} {\rm d}x + v_{yy} {\rm d}y \bigr),
\end{gather*}
but
\begin{gather*}
L_6 {\rm d}x \wedge {\rm d}y = \bigl( w (u_{xx} {\rm d}x + u_{xy} {\rm d}y \bigr) \wedge \bigl( v_{xx} {\rm d}x + v_{xy} {\rm d}y \bigr), \\
L_6 {\rm d}x {\rm d}x \wedge {\rm d}x {\rm d}y \wedge {\rm d}y {\rm d}y = \bigl( u_{xx} {\rm d}x {\rm d}x + u_{xy} {\rm d}x {\rm d}y + u_{yy} {\rm d}y {\rm d}y \bigr) \\
\hphantom{L_6 {\rm d}x {\rm d}x \wedge {\rm d}x {\rm d}y \wedge {\rm d}y {\rm d}y =}{} \wedge \bigl( v_{xx} {\rm d}x {\rm d}x + v_{xy} {\rm d}x {\rm d}y + v_{yy} {\rm d}y {\rm d}y \bigr) \wedge {\rm d}y {\rm d}y .
\end{gather*}

We do, however, make the conjecture that the correspondence is surjective: that is, that if a~Lagrangian density has projectable Euler--Lagrange equations then, in any coordinate system, its Lagrangian function must be a~sum of determinants of the form~\eqref{Det}, or of minors of such determinants with essentially the same format. One might clearly attempt to establish such a~conjecture by considering the homogeneous components of the Lagrangian, and it is certainly the case that the quadratic component satisfies the condition.

\begin{Proposition}If the Lagrangian density $\lambda$ on $J^k\pi$ has projectable Euler--Lagrange equations then the quadratic terms of the polynomial Lagrangian function $L$ are determinants with a hyperaffine structure.
\end{Proposition}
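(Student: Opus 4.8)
The plan is to reduce the statement to a single algebraic fact about bihomogeneous polynomials that vanish on a diagonal, and then to read the resulting structure back as a sum of $2\times 2$ determinants. First I would isolate the quadratic component of $L$. Since $L$ is a polynomial in the highest derivatives by Theorem~\ref{poly}, the second derivatives $c^{\alpha\beta}_{\Jrm\Krm} = \pdb{L}{u^\alpha_\Jrm}{u^\beta_\Krm}$ (with $\abs{\Jrm}=\abs{\Krm}=k$) have a well-defined part of degree zero in those derivatives, which is a function on $J^{k-1}\pi$ and is exactly the coefficient of the quadratic monomial $u^\alpha_\Jrm u^\beta_\Krm$; this array is symmetric under $(\alpha,\Jrm)\leftrightarrow(\beta,\Krm)$. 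Taking the degree-zero part of the projectability relation~\eqref{Eq6} isolates these quadratic coefficients and yields, for each fixed pair $(\alpha,\beta)$ and each $H$ with $\abs{H}=2k$, the linear constraint $\sum_{\Jrm+\Krm=H} c^{\alpha\beta}_{\Jrm\Krm}=0$.

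Next I would encode each pair $(\alpha,\beta)$ by the generating polynomial $P^{\alpha\beta}(s,t)=\sum_{\abs{\Jrm}=\abs{\Krm}=k} c^{\alpha\beta}_{\Jrm\Krm}\, s^\Jrm t^\Krm$ in two sets of formal variables $s=(s_i)$ and $t=(t_i)$, which is bihomogeneous of bidegree $(k,k)$. Because the coefficient of $t^H$ in $P^{\alpha\beta}(t,t)$ is exactly the left-hand side of the constraint just obtained, projectability of the quadratic part is equivalent to the diagonal vanishing $P^{\alpha\beta}(t,t)=0$ for every $(\alpha,\beta)$. The heart of the argument is then the claim that any bihomogeneous polynomial of bidegree $(k,k)$ that vanishes on $s=t$ is a combination of terms $s^{I_1} t^{I_2}\bigl(s_i t_j - s_j t_i\bigr)$ with $\abs{I_1}=\abs{I_2}=k-1$. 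To prove this I would observe that the restriction $P\mapsto P(t,t)$ sends $s^\Jrm t^\Krm$ to $t^{\Jrm+\Krm}$, so its kernel is spanned by the differences $s^\Jrm t^\Krm - s^{\Jrm'}t^{\Krm'}$ with $\Jrm+\Krm=\Jrm'+\Krm'$; any two such pairs are joined by elementary exchange moves $(\Jrm,\Krm)\mapsto(\Jrm-1_i+1_j,\,\Krm+1_i-1_j)$, and a single move contributes exactly $s^{\Jrm-1_i}t^{\Krm-1_j}\bigl(s_i t_j - s_j t_i\bigr)$, so telescoping along a path of moves writes each kernel generator in the required form.

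Finally I would translate the building blocks back into Lagrangian terms. Reading off coefficients of $s^\Jrm t^\Krm$, the factor $s^{I_1}t^{I_2}\bigl(s_i t_j - s_j t_i\bigr)$ with $\Jc_1=1_i$ and $\Jc_2=1_j$ is the $2\times 2$ determinant $u^\alpha_{I_1+\Jc_1}u^\beta_{I_2+\Jc_2}-u^\alpha_{I_1+\Jc_2}u^\beta_{I_2+\Jc_1}$, a minor of the format~\eqref{Det} with $q=1$ and with coefficient a function on $J^{k-1}\pi$; summing these over the decompositions of all the $P^{\alpha\beta}$, and using the symmetry to reassemble consistently, exhibits the quadratic part of $L$ as a sum of such determinants, which is the asserted hyperaffine structure. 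The example $L_5$ and the single-variable case $u_{xxx}u_{yyy}-u_{xxy}u_{xyy}$ are precisely of this output. I expect the main obstacle to be the combinatorial core in the second paragraph: verifying that the exchange moves connect all index pairs with a fixed sum while every intermediate monomial stays genuine (so that $\Jrm(i)\ge1$ and $\Krm(j)\ge1$ are available at each step, which follows by reducing the distance $\sum_i\abs{\Jrm(i)-\Jrm'(i)}$), together with tracking the numerical constants that, as warned in Section~\ref{Snotation}, appear when passing between the symmetric-tensor contraction convention and the monomial coefficients.
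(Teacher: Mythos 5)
Your proposal is correct, and it reaches the paper's conclusion by a genuinely different route in the decisive step. Both arguments begin the same way: restrict \eqref{Eq6} to the quadratic coefficients (your generating-function reformulation $P^{\alpha\beta}(t,t)=0$ is just a repackaging of the constraint $\sum_{\Hrm_1+\Hrm_2=H}A^{\Hrm_1\Hrm_2}_{\alpha_1\alpha_2}=0$), and both reduce, by subtracting a base term in each partition class, to showing that a single difference $u^{\alpha_1}_{\Hrm_1}u^{\alpha_2}_{\Hrm_2}-u^{\alpha_1}_{\Krm_1}u^{\alpha_2}_{\Krm_2}$ with $\Hrm_1+\Hrm_2=\Krm_1+\Krm_2$ has the format of \eqref{Det}. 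The paper does this by exhibiting each such difference as a \emph{single} $2\times2$ determinant: it constructs $I_1(i)=\min\{\Hrm_1(i),\Krm_1(i)\}$, $I_2(i)=\min\{\Hrm_2(i),\Krm_2(i)\}$, checks that the complementary multi-indices $\Jc_1$, $\Jc_2$ cross over correctly, and then must separately verify $\abs{I_1}=\abs{I_2}$ so that one value of $q$ serves for the whole block. You instead telescope the difference along a chain of elementary exchanges $(\Jrm,\Krm)\mapsto(\Jrm-1_i+1_j,\Krm+1_i-1_j)$, each contributing a $2\times2$ minor of \eqref{Det} with $q=1$; your nonnegativity check (if $\Jrm(i)>\Jrm'(i)$ and $\Jrm(j)<\Jrm'(j)$ then $\Jrm(i)\ge1$ and $\Krm(j)=H(j)-\Jrm(j)\ge\Krm'(j)+1\ge1$, and the $\ell^1$-distance drops by $2$) does go through, so the telescoping is legitimate and all coefficients remain $\pm$ the original $J^{k-1}\pi$-functions. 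What each approach buys: the paper's construction is more economical (one determinant per pair, with a canonical $q$ that can exceed $1$) and matches the general $(p_q,q)$ hyperform picture more directly, while yours is combinatorially lighter --- it avoids the min construction and the $\abs{I_1}=\abs{I_2}$ verification entirely, at the cost of producing more terms and of normalizing everything to $q=1$. The only loose ends in your write-up are bookkeeping ones you already flag: the symmetrization in $(\alpha,\Jrm)\leftrightarrow(\beta,\Krm)$ and the factor of $2$ between the Hessian entries and the monomial coefficients, neither of which affects the structural conclusion.
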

\begin{proof}Suppose the quadratic terms of $L$ are $A^{\Hrm_1 \Hrm_2}_{\alpha_1 \alpha_2} u^{\alpha_1}_{\Hrm_1} u^{\alpha_2}_{\Hrm_2}$ where $\abs{\Hrm_1} = \abs{\Hrm_2} = k$. Partition the set of quadratic terms according to the multi-index $H = \Hrm_1 + \Hrm_2$, and consider the terms
\begin{gather*}
\Psi_H = \sum_{\Hrm_1 + \Hrm_2 = H} A^{\Hrm_1 \Hrm_2}_{\alpha_1 \alpha_2} u^{\alpha_1}_{\Hrm_1} u^{\alpha_2}_{\Hrm_2}
\end{gather*}
in a single component of the partition. There must be at least two distinct terms; for if there were only a single term then there would have to be an index $i$ such that $\Hrm_1(i) = \Hrm_2(i) = k$, and then~\eqref{Eq6} would imply $A^{\Hrm_1 \Hrm_2}_{\alpha_1 \alpha_2} = 0$.

Choose, arbitrarily, one term $A^{\Krm_1 \Krm_2}_{\alpha_1 \alpha_2} u^{\alpha_1}_{\Krm_1} u^{\alpha_2}_{\Krm_2}$, so that~\eqref{Eq6} now gives
\begin{gather*}
A^{\Krm_1 \Krm_2}_{\alpha_1 \alpha_2} = \sum_{\substack{\Hrm_1 + \Hrm_2 = H \\ (\Hrm_1,\Hrm_2) \ne (\Krm_1,\Krm_2)}} - A^{\Hrm_1 \Hrm_2}_{\alpha_1 \alpha_2},
\end{gather*}
and hence
\begin{gather*}
\Psi_H = \sum_{\Hrm_1 + \Hrm_2 = H} A^{\Hrm_1 \Hrm_2}_{\alpha_1 \alpha_2}
\bigl( u^{\alpha_1}_{\Hrm_1} u^{\alpha_2}_{\Hrm_2} - u^{\alpha_1}_{\Krm_1} u^{\alpha_2}_{\Krm_2} \bigr),
\end{gather*}
so that each $\Psi_H$ is a sum of determinants. (The restriction $(\Hrm_1,\Hrm_2) \ne (\Krm_1,\Krm_2)$ is omitted from the latter sum because if $(\Hrm_1,\Hrm_2) = (\Krm_1,\Krm_2)$ then the term vanishes.)

To see that each determinant has a hyperaffine structure (that is, can be written in the form~\eqref{Det}) consider a single expression
\begin{gather}\label{2x2}
u^{\alpha_1}_{\Hrm_1} u^{\alpha_2}_{\Hrm_2} - u^{\alpha_1}_{\Krm_1} u^{\alpha_2}_{\Krm_2}
\end{gather}
and let $I_1$, $I_2$ be the multi-indices defined by
\begin{gather*}
I_1(i) = \min \{\Hrm_1(i), \Krm_1(i)\} , \qquad I_2(i) = \min \{\Hrm_2(i), \Krm_2(i)\}, \qquad 1 \le i \le m .
\end{gather*}
Consider any index $i$. Suppose $I_1(i) = \Hrm_1(i)$, so that $\Hrm_1(i) \le \Krm_1(i)$; then $I_2(i) = \Krm_2(i)$, for if not we would have $\Hrm_2(i) = I_2(i) < \Krm_2(i)$, contradicting $\Hrm_1(i) + \Hrm_2(i) = \Krm_1(i) + \Krm_2(i)$. If instead $I_1(i) < \Hrm_1(i)$ then $I_1(i) = \Krm_1(i)$, and a similar argument shows that $I_2(i) = \Hrm_2(i)$.

Now let $\Jc_1$, $\Jc_2$ be the multi-indices defined by $I_1 + \Jc_1 = \Hrm_1$, $I_2 + \Jc_2 = \Hrm_2$. Consider any index~$i$. If $I_1(i) = \Hrm_1(i)$ and $I_2(i) = \Krm_2(i)$ then
\begin{gather*}
I_1(i) + \Jc_2(i) = \Hrm_1(i) + \Hrm_2(i) - I_2(i) = \Krm_1(i) + \Krm_2(i) - I_2(i) = \Krm_1(i)
\end{gather*}
and
\begin{gather*}
I_2(i) + \Jc_1(i) = \Krm_2(i) + \Hrm_1(i) - I_1(i) = \Krm_2(i) ,
\end{gather*}
and a similar argument shows that these relations also hold when $I_1(i) = \Krm_1(i)$ and $I_2(i) = \Hrm_2(i)$. We therefore see that $I_1 + \Jc_2 = \Krm_1$ and $I_2 + \Jc_1 = \Krm_2$, so that expression~\eqref{2x2} can be written as the determinant
\begin{gather*}
\begin{vmatrix}
u^{\alpha_1}_{I_1 + \Jc_1} & u^{\alpha_1}_{I_1 + \Jc_2} \\ u^{\alpha_2}_{I_2 + \Jc_1} & u^{\alpha_2}_{I_2 + \Jc_2}
\end{vmatrix} .
\end{gather*}
To see that this is indeed an instance of determinant~\eqref{Det} (or one of its minors), we must finally check that $\abs{I_1} = \abs{I_2}$. Let $P = \{i\colon I_1(i) = \Hrm_1(i)\}$ and $Q = \{i \colon I_1(i) < \Hrm_1(i)\}$; then
\begin{align*}
\abs{I_1} - \abs{I_2} & = \sum_{i \in P} \bigl( I_1(i) - I_2(i) \bigr) + \sum_{i \in Q} \bigl( I_1(i) - I_2(i) \bigr) \\
& = \sum_{i \in P} \bigl( \Hrm_1(i) - \Krm_2(i) \bigr) + \sum_{i \in Q} \bigl( \Krm_1(i) - \Hrm_2(i) \bigr) \\
& = \sum_{i \in P} \bigl( \Hrm_1(i) - \Krm_2(i) \bigr) - \sum_{i \in P} \bigl( \Krm_1(i) - \Hrm_2(i) \bigr) ,
\end{align*}
and for any index $i$ we have $\Hrm_1(i) + \Hrm_2(i) = \Krm_1(i) + \Krm_2(i)$. We may therefore set $q = \abs{\Jc_1} = \abs{\Jc_2}$ so that $\abs{I_1} = \abs{I_2} = k-q$.
\end{proof}

A similar result for an arbitrary homogeneous component of~$L$ does, however, seem to be significantly more complicated to prove, and so work continues on the project.

\subsection*{Acknowledgements}

The author would like to thank the referees for their helpful suggestions regarding the presentation of some technical aspects of this work.

\pdfbookmark[1]{References}{ref}
\LastPageEnding

\end{document}